\newif\ifdraft\draftfalse
\newif\ifcite\citefalse
\newif\ifblow\blowtrue
\ifcite\usepackage{showkeys}\else\usepackage[notcite,notref]{showkeys}\fi\fi
\newtheorem{theorem}
{Theorem}
\theoremstyle{remark}
\theoremstyle{definition}
\theoremstyle{remark}
\newtheorem{remark}[equation]{Remark}
\numberwithin{equation}{section}
\def\ch{{\mathcal H}}
\def\br{\mathbb R}
\def\bc{\mathbb C}
\def\bp{\mathbb P}
\def\bean{\begin{eqnarray}}
\def\eean{\end{eqnarray}}
\def\bea{\begin{eqnarray*}}
\def\eea{\end{eqnarray*}}
\def\l{\label}
\def\x0{X_{s_0}}
\def\2p{\bp^1\times \bp^1}
\def\a{\alpha}
\def\o{\omega}
\def\nm{\nonumber}
\def\er{\eqref}
\def\ben{\begin{equation}}
\def\een{\end{equation}}
\def\fg{\mathfrak{g}}
\def\fh{\mathfrak{h}}
\def\fm{\mathfrak{m}}
\def\fo{\mathfrak{o}}
\def\fs{\mathfrak{s}}
\newcommand\ii{\mathcal I}
\newcommand\ban{\begin{proof}[Answer]}
\newcommand\ean{\end{proof}}
\newcommand\ad{\mathop{\mathrm{ad}}\nolimits}
\newcommand\om{\omega}
\begin{document}
\title{On transgression in associated bundles}
\author{Zhaohu Nie}
\email{znie@psu.edu}
\address{Department of Mathematics\\
Penn State Altoona\\
3000 Ivyside Park\\
Altoona, PA 16601, USA}

\date{\today}
\subjclass[2000]{53C05, 57R20}
\keywords{Chern-Simons forms, transgression, associated bundles}

\begin{abstract}
We formulate and prove a formula for transgressing characteristic forms in general associated bundles following a method of Chern~\cite{chern90}. As applications, we derive D. Johnson's explicit formula in \cite{johnson} for such general transgression and Chern's first transgression formula in \cite{chern2} for the Euler class.
\end{abstract}

\maketitle

\section{Introduction}

Let $G$ be a Lie group with Lie algebra $\fg$, $M$ a manifold, and $\pi:E\to M$ a principal $G$-bundle over $M$. A connection on $E$ is given by a $\fg$-valued 1-form $\omega$ on $E$ satisfying certain conditions. Its curvature form is a $\fg$-valued 2-form on $E$ defined by  
\begin{equation}
\label{structural}
\Omega=d\omega+\frac{1}{2}[\omega,\omega].
\end{equation}
For $P\in \ii(\fg)$ an $\ad_G$-invariant polynomial on $\fg$, the form $P(\Omega)$, 
\emph{a priori} defined on $E$, is horizontal and invariant and so naturally defines a form on $M$. $P(\Omega)$ is closed and its cohomology class is independent of the choice of the connection $\omega$. As such, it is called a characteristic form of $E$.

Chern-Simons \cite{cs} transgressed $P(\Omega)$ in the principal bundle $E$. That is, they showed that $P(\Omega)$ is a coboundary in $E$ by canonically constructing a form $TP(\omega)$ ($T$ for transgression), depending on the connection $\omega$,
such that $dTP(\omega)=P(\Omega)$. The Chern-Simons forms $TP(\omega)$ define important secondary invariants and appear naturally in questions involving manifolds with boundaries.

It is also important to be able to transgress the characteristic form $P(\Omega)$ in ``smaller" bundles, that is, associated bundles. Let $H<G$ be a subgroup with Lie algebra $\fh$. We assume that the homogenous space $G/H$ is \emph{reductive} (see \er{decomp}). Consider the associated bundle $B=E\times_G (G/H)$, which fits in the following commutative diagram 
$$
\xymatrix{
E\ar[rd]_\pi\ar[rr]^{\pi_1} & & B\ar[ld]^{\pi_2}\\
 & M.& 
 }$$
Here $\pi_1:E\to B$ is a principal bundle with structure group $H$. The question is then to find a canonical form $TP(\omega)$ on the associated bundle $B$, which transgresses $P(\Omega)$ (regarded as on $B$), at least in many important cases. (See the precise statement in \er{new relation}.) 

Chern \cite{chern90} solved this question for the important special case of Chern classes $c_s$ with the relevant groups being $GL(s-1;\bc)< GL(q;\bc)$ for $1\leq s\leq q$, using a deformation trick with its root in \cite{cs}.  On the other hand, in the general situation, D. Johnson \cite{johnson} explicitly constructed $TP(\omega)$ by (rather complicated) recurrence. 

In this paper, we first formulate and prove a transgression formula in general associated bundles in Section 2. The setup we consider follows that of D. Johnson \cite{johnson}, but the idea of construction comes from Chern~\cite{chern90}. Then we derive D. Johnson's explicit transgression formula in \cite{johnson} rather easily in Section 3. In Section 4, we show that the very first transgression formula which started the whole business, that is, Chern's formula  in \cite{chern2} for transgressing the Euler class in the unit tangent sphere bundle, can be obtained using our general method. This shows compatibility of these differential-geometric notions of transgression.

The authors thanks the referee for careful reading and helpful comments. 

\section{General transgression formula}

With notation as above and following D. Johnson \cite{johnson}, we choose and fix an $\ad_H$-invariant decomposition 
\ben\label{decomp}
\fg=\fh+\fm,
\een
since we assume that $G/H$ is reductive (see \cite[\S X.2]{KN2}). 
We denote the corresponding projections by $p_\fh:\fg\to \fh$ and $p_\fm:\fg\to \fm$. 
Then one has the following decomposition of $\omega$
\begin{gather}
\omega=\psi+\phi,\label{opp}\\
\psi=p_\fh\circ \omega\in \Lambda^1(E,\fh),\ \phi=p_\fm\circ \omega\in \Lambda^1(E,\fm).\label{comp proj}
\end{gather}
$\psi$ is easily seen to be a connection form on $\pi_1:E\to B$
with curvature form $\Psi=d\psi+\frac{1}{2}[\psi,\psi]$. 
By~\er{structural} and~\er{opp}, we calculate the curvature form of $\omega$ to be 
\begin{align}
\Omega&=d(\psi+\phi)+\frac{1}{2}[\psi+\phi,\psi+\phi]\nm\\
	&=d\psi+\frac{1}{2}[\psi,\psi]+d\phi+[\psi,\phi]+\frac{1}{2}[\phi,\phi]\nm\\
	&=\Psi+d_\ch \phi+\frac{1}{2}[\phi,\phi],\label{eq-O}
\end{align}
where
\ben\label{phi-cov}
d_\ch \phi=d\phi+[\psi,\phi]
\een
is the $\psi$-covariant derivative of $\phi$. One then has from \er{eq-O}
\begin{equation}
\label{write-back}
d_\ch \phi=\Omega-\Psi-\frac{1}{2}[\phi,\phi].
\end{equation}

Following \cite{chern90}, consider the following family of differential forms on $E$
\ben\label{ot}
\omega(t)=\psi+t\phi,\ 0\leq t\leq 1. 
\een
Following~\er{structural}, define
\ben\label{t}
\Omega(t)=d\omega(t)+\frac{1}{2}[\omega(t),\omega(t)].
\een
Similar to ~\er{eq-O}, one calculates 
\begin{align}
\Omega(t)&=\Psi+t\, d_\ch \phi+\frac{1}{2}t^2[\phi,\phi]\label{differential}\\
&=(1-t)\Psi-\frac{1}{2}t(1-t)[\phi,\phi]+t\Omega.\label{Omega(t)}
\end{align}
where the last equality uses~\er{write-back}. 

We polarize an $\ad_G$-invariant polynomial $P\in \ii^k(\fg)$ of degree $k$ to a symmetric, multi-linear function $P:\underbrace{\fg\otimes\cdots\otimes\fg}_{k}\to \br$. 
Chern \cite{chern90} proved the following theorem in the special case of Chern classes, where the $P(\Psi)$ term in \er{new relation} is 0.
\begin{theorem}
\label{general formula} 
There is a canonical differential form on the associated bundle $B$ defined by 
\ben\label{tpo}
TP(\omega)=k\int_0^1 P(\phi,\Omega(t),\dots,\Omega(t))\, dt
\een
such that 
\ben
\label{new relation}
dTP(\omega)=P(\Omega)-P(\Psi).
\een
\end{theorem}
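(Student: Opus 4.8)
The plan is to compute $dTP(\omega)$ directly by differentiating under the integral sign and showing that the integrand is a total $t$-derivative, so that the integral telescopes to the two endpoint values $P(\Omega)$ and $P(\Psi)$. This is the standard Chern–Simons transgression mechanism adapted to the associated-bundle setting, and the formula for $\Omega(t)$ in~\er{Omega(t)} is arranged precisely so that the endpoints $t=0$ and $t=1$ give $\Omega(0)=\Psi$ and $\Omega(1)=\Omega$.

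First I would recall the two algebraic inputs that make the computation work. The Bianchi identity $d_\ch\Omega(t)=0$ (equivalently $d\Omega(t)=[\Omega(t),\omega(t)]$ in the appropriate sense) holds for each connection $\omega(t)$, and the infinitesimal variation is $\frac{d}{dt}\Omega(t)=d_\ch\phi + t[\phi,\phi] = d_{\omega(t)}\phi$, the covariant derivative of $\phi$ with respect to the connection $\omega(t)=\psi+t\phi$; this follows by differentiating~\er{differential}. The key structural fact is that for the invariant polynomial $P$, $\ad$-invariance yields the infinitesimal identity that lets one integrate by parts: for any $\fg$-valued forms, $d\,P(\eta,\Omega(t),\dots,\Omega(t))$ can be rewritten using $P(d_{\omega(t)}\eta,\Omega(t),\dots,\Omega(t))$ together with the Bianchi identity killing all the terms where $d$ hits an $\Omega(t)$ factor.

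With these in hand, the main computation is to show
\ben\label{plan-key}
\frac{d}{dt}P(\Omega(t),\dots,\Omega(t))=k\,P\!\left(\tfrac{d}{dt}\Omega(t),\Omega(t),\dots,\Omega(t)\right)=k\,d\,P(\phi,\Omega(t),\dots,\Omega(t)).
\een
The first equality is symmetry of $P$ in its $k$ arguments; the second is the integration-by-parts identity, using $\frac{d}{dt}\Omega(t)=d_{\omega(t)}\phi$ and the Bianchi identity. Granting~\er{plan-key}, one computes
\bea
dTP(\omega)&=&k\int_0^1 d\,P(\phi,\Omega(t),\dots,\Omega(t))\,dt=\int_0^1\frac{d}{dt}P(\Omega(t),\dots,\Omega(t))\,dt\\
&=&P(\Omega(1),\dots,\Omega(1))-P(\Omega(0),\dots,\Omega(0))=P(\Omega)-P(\Psi),
\eea
using $\Omega(1)=\Omega$ and $\Omega(0)=\Psi$ from~\er{Omega(t)}. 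Because $d$ commutes with the $t$-integration, this yields~\er{new relation}.

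The main obstacle, and the step requiring genuine care, is establishing the integration-by-parts identity~\er{plan-key} rigorously: one must verify that when $d$ is applied to $P(\phi,\Omega(t),\dots,\Omega(t))$, the terms in which $d$ lands on an $\Omega(t)$ factor vanish by the Bianchi identity, while the term where $d$ hits $\phi$ combines with the commutator corrections from $\ad$-invariance of $P$ to produce exactly $P(d_{\omega(t)}\phi,\Omega(t),\dots,\Omega(t))$. The bookkeeping of signs from moving $d$ past odd-degree forms, and confirming that the $\fh$-connection $\psi$ rather than the full $\omega$ is what makes $\Psi$ (not $\Omega$) appear at the endpoint $t=0$, is where the associated-bundle structure genuinely enters. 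Finally I would check horizontality and $H$-invariance so that $TP(\omega)$, \emph{a priori} a form on $E$, descends to a well-defined form on $B$ via the principal $H$-bundle $\pi_1:E\to B$.
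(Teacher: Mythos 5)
Your proposal is correct and follows essentially the same route as the paper: differentiate under the integral sign, use the symmetry and $\ad$-invariance of $P$ together with the Bianchi identity to establish $\frac{d}{dt}P(\Omega(t))=k\,dP(\phi,\Omega(t),\dots,\Omega(t))$, evaluate at the endpoints $\Omega(0)=\Psi$ and $\Omega(1)=\Omega$, and check $H$-invariance and horizontality so that $TP(\omega)$ descends from $E$ to $B$. One notational caveat: in the paper $d_\ch$ denotes the $\psi$-covariant derivative, for which $d_\ch\Omega(t)=t[\Omega(t),\phi]\neq 0$; the vanishing Bianchi identity you need is the one for the connection $\omega(t)$ itself, $d\Omega(t)+[\omega(t),\Omega(t)]=0$, which is what your parenthetical correctly states, and either form of the identity makes the computation go through.
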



\begin{proof} We first show that $TP(\omega)$ in \er{tpo} defines naturally a differential form on $B$ by showing that it is invariant and horizontal for the principal bundle $\pi_1:E\to B$ with structure group $H$ and connection $\psi$. It is easy to see that for the right multiplication $R_h$ by $h\in H$, one has
$$
R_h^*\psi=\ad_{h^{-1}}\psi,\ R_h^*\phi=\ad_{h^{-1}}\phi,
$$
since $\omega$ is a connection form and the decomposition in \er{decomp} is $\ad_H$-invariant. Therefore by \er{ot} and \er{t}, one has $R_h^*\Omega(t)=\ad_{h^{-1}}\Omega(t)$. The invariance of $P$ under $\ad_G$ and hence under $\ad_H$ then shows that $TP(\omega)$ is invariant. It is also easy to see that $\phi$ is horizontal from its definition in \er{comp proj}. The horizontality of curvature forms $\Psi$ and $\Omega$ then implies the horizontality of $\Omega(t)$ from \er{Omega(t)} and that of $TP(\omega)$.  

In~\er{Omega(t)}, $\Omega(1)=\Omega$ and $\Omega(0)=\Psi$. Therefore to prove \er{new relation}, we only need to show that 
\ben\label{to prove}
\frac{\partial}{\partial t}P(\Omega(t))=k\,dP(\phi,\Omega(t),\dots,\Omega(t)).
\een

One computes
\begin{align*}
 & \frac{\partial}{\partial t}P(\Omega(t))=kP\left(\frac{\partial}{\partial t}\Omega(t),\Omega(t),\dots,\Omega(t)\right)\\
 =&kP(d_\ch \phi+t[\phi,\phi],\Omega(t),\dots,\Omega(t))
 \end{align*}
 by \er{differential}. Also
 \begin{align*}
 &dP(\phi,\Omega(t),\dots,\Omega(t))\\
 =&P(d_\ch\phi,\Omega(t),\dots,\Omega(t))-(k-1)P(\phi,d_\ch\Omega(t),\Omega(t),\dots,\Omega(t))\\
 =&P(d_\ch\phi,\Omega(t),\dots,\Omega(t))-(k-1)P(\phi,t[\Omega(t),\phi],\Omega(t),\dots,\Omega(t))\\
=&P(d_\ch\phi,\Omega(t),\dots,\Omega(t))+P(t[\phi,\phi],\Omega(t),\dots,\Omega(t))\\
=&P(d_\ch\phi+t[\phi,\phi],\Omega(t),\dots,\Omega(t)),
\end{align*}
where the second equality uses 
\begin{align*}
&d_\ch \Omega(t)=d\Omega(t)+[\psi,\Omega(t)]\\
=&[d\omega(t),\omega(t)]-[\Omega(t),\psi]=[\Omega(t),\omega(t)]-[\Omega(t),\psi]\\
=&t[\Omega(t),\phi]
\end{align*}
by differentiating \er{t} and using \er{ot}, and the third uses the $\ad_G$-invariance of $P$, which implies
$$
P([\phi,\phi],\Omega(t),\dots,\Omega(t))+(k-1)P(\phi,[\Omega(t),\phi],\Omega(t),\dots,\Omega(t))=0.
$$
\end{proof}

\begin{remark} In the spirit of the modern approach to transgression (see, e.g., \cite[Def. 1.8]{BM}), we can interpret Theorem \ref{general formula} as follows. Consider the following fibration $\pi_1\times Id_\br: E\times \br\to B\times \br$.  
Let $p_E: E\times \br\to E$ be the natural projection. 
Consider the differential form $\tilde \omega\in \Lambda^1(E\times \br,\fg)$ such that 
$$\tilde \omega\left(\frac{\partial}{\partial t}\right)=0,\ \tilde \omega|_{E\times\{t\}}=p_E^*\omega(t)$$ 
for $t\in \br$ with $\omega(t)$ defined in \er{ot}.  
Then following \er{structural}, define 
$$\tilde\Omega=d^{E\times \br}\tilde\om+\frac{1}{2}[\tilde\om,\tilde\om].$$ 
In comparison with \er{t} and using \er{ot}, we see that
\ben\l{omt}
\tilde\Omega=dt\wedge p_E^*\phi+p_E^*\Omega(t).
\een
For the invariant polynomial $P\in {\mathcal I}^k(\fg)$, $P(\tilde\Omega)$ is a well-defined form on $B\times \br$, and 
\ben\l{d=0}
d^{B\times \br}P(\tilde\Omega)=0
\een
by the standard Chern-Weil theory. By \er{omt}, separate the terms in $P(\tilde \Omega)$ without or with $dt$ and we have explicitly
\ben\label{inner}
P(\tilde \Omega)=p_E^* P(\Omega(t))+dt\wedge \bigl(k\, p_E^* P(\phi,\Omega(t),\cdots,\Omega(t))\bigr).
\een 
Computing the coefficients of $dt$ in \er{d=0} and using \er{inner}, we immediately see that 
$$
\frac{\partial}{\partial t}P(\Omega(t))=k\, d^BP(\phi,\Omega(t),\cdots,\Omega(t)).
$$
This reproves \er{to prove} and hence Theorem \ref{general formula}.  
\end{remark}

\section{D. Johnson's explicit formula}

As in~\cite[(3.5)]{cs}, one can evaluate the integral formula in \er{tpo} using \er{Omega(t)}, which then gives the following explicit formula of D. Johnson \cite{johnson}, who proved it using recurrence. 
\begin{theorem}[D. Johnson \cite{johnson}]\label{expansion}
The explicit formula for the transgression form on an associated bundle is
\ben\label{dj's}
TP(\omega)=\sum_{i=0}^{k-1}\sum_{j=0}^{k-i-1} A_{ij} P(\phi,[\phi,\phi]^i,\Psi^j,\Omega^{k-i-j-1}), 
\een
where $A_{ij}=(-1)^i \frac{k!(k-j-1)!(i+j)!}{2^i i!j!(k-i-j-1)!(k+i)!}$. 
\end{theorem}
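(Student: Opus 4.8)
The plan is to evaluate the integral \er{tpo} directly, substituting the closed expression \er{Omega(t)} for $\Omega(t)$ and expanding by multilinearity exactly as in \cite[(3.5)]{cs}. Writing the three-term sum
$$
\Omega(t)=t\,\Omega+(1-t)\,\Psi-\tfrac12\,t(1-t)[\phi,\phi],
$$
the integrand $P(\phi,\Omega(t),\dots,\Omega(t))$, which carries $k-1$ copies of $\Omega(t)$, becomes a finite sum: since $P$ is symmetric and multilinear, distributing the three summands over the $k-1$ slots and grouping by type is the only bookkeeping required.

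First I would collect terms by type. Assigning $i$ of the slots the $[\phi,\phi]$-summand, $j$ of them the $\Psi$-summand, and the remaining $k-i-j-1$ the $\Omega$-summand produces the form $P(\phi,[\phi,\phi]^i,\Psi^j,\Omega^{k-i-j-1})$, weighted by the multinomial coefficient $\frac{(k-1)!}{i!\,j!\,(k-i-j-1)!}$ and by the scalar $(-\tfrac12)^i$ coming from the coefficient of $[\phi,\phi]$. The nonnegativity constraints $i\ge0$, $j\ge0$, $k-i-j-1\ge0$ reproduce precisely the summation ranges in \er{dj's}.

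Next I would isolate the $t$-dependence. The three summands contribute the scalar factors $t^{k-i-j-1}$, $(1-t)^j$, and $\bigl(t(1-t)\bigr)^i$ respectively, so after pulling the $t$-independent form out of the integral the remaining scalar is the Euler integral
$$
\int_0^1 t^{\,k-j-1}(1-t)^{\,i+j}\,dt=B(k-j,\,i+j+1)=\frac{(k-j-1)!\,(i+j)!}{(k+i)!}.
$$
Multiplying together the prefactor $k$ from \er{tpo}, the multinomial coefficient, the sign factor $(-\tfrac12)^i$, and this Beta value, and simplifying with $k\,(k-1)!=k!$, yields exactly $A_{ij}=(-1)^i\frac{k!\,(k-j-1)!\,(i+j)!}{2^i\,i!\,j!\,(k-i-j-1)!\,(k+i)!}$.

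There is no genuine conceptual obstacle, since Theorem \ref{general formula} already provides the integral representation; the work is the combinatorial expansion together with a single Beta-integral evaluation. The step demanding the most care is matching conventions: ensuring that the integrand has $k-1$ (not $k$) copies of $\Omega(t)$, tracking the factor $(-\tfrac12)^i$ and the exponents of $t$ and $1-t$ so that the Beta integral carries the correct parameters, and checking that the combinatorial index ranges coincide with those of \er{dj's}. The payoff is that this single elementary pass reproduces the closed coefficients $A_{ij}$ that D. Johnson \cite{johnson} originally obtained through a more involved recurrence.
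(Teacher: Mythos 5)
Your proposal is correct and follows essentially the same route as the paper's own proof: substitute \eqref{Omega(t)} into \eqref{tpo}, expand by the multinomial theorem over the $k-1$ slots, and evaluate the resulting Beta integral $\int_0^1 t^{k-j-1}(1-t)^{i+j}\,dt=\frac{(k-j-1)!\,(i+j)!}{(k+i)!}$ to recover $A_{ij}$. All the details you flag as needing care (the $k-1$ copies, the $(-\tfrac12)^i$ factor, the index ranges) check out exactly as in the paper.
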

\begin{proof}[Proof using Theorem \ref{general formula}] Plugging \er{Omega(t)} into \er{tpo}, applying the multinomial theorem for the $(k-1)$ arguments of $\Omega(t)$ (since $P$ is symmetric and the differential forms involved are of degree two), and applying some basic knowledge about the beta functions, one gets
\begin{align*}
&TP(\omega)=k\int_0^1 P\biggl(\phi,\underbrace{-\frac{1}{2}t(1-t)[\phi,\phi]+(1-t)\Psi+t\Omega,\dots}_{k-1}\biggr)\,dt\\
=&k\sum_{i=0}^{k-1}\sum_{j=0}^{k-i-1}\frac{(k-1)!}{i!j!(k-i-j-1)!}\int_0^1 \Bigl(-\frac{1}{2}t(1-t)\Bigr)^i (1-t)^j t^{k-i-j-1}\, dt\\
& \qquad\qquad\quad P(\phi,[\phi,\phi]^i,\Psi^j,\Omega^{k-i-j-1})\\
=&k\sum_{i=0}^{k-1}\sum_{j=0}^{k-i-1}\frac{(k-1)!}{i!j!(k-i-j-1)!}\Bigl(-\frac{1}{2}\Bigr)^i\int_0^1 t^{k-j-1} (1-t)^{i+j}\, dt\\
&\qquad\qquad\quad P(\phi,[\phi,\phi]^i,\Psi^j,\Omega^{k-i-j-1})\\
=&\sum_{i=0}^{k-1}\sum_{j=0}^{k-i-1}  \frac{k!}{i!j!(k-i-j-1)!} \Bigl(-\frac{1}{2}\Bigr)^i\frac{(k-j-1)!(i+j)!}{(k+i)!} \\
&\ \ \ \ \quad\quad\quad P(\phi,[\phi,\phi]^i,\Psi^j,\Omega^{k-i-j-1}).
\end{align*}
This is \er{dj's}. 
\end{proof}

\section{Chern's first transgression formula}

As another application, we work out Chern's first transgression formula in \cite{chern2} for the Euler form using Theorem \ref{general formula}. (\cite{chern2} reformulates and simplifies the truly ``first'' formula in \cite{chern}.) 

Assume $M$ has even dimension $n=2k$. For the Euler form, the relevant Lie groups are $\mathrm{SO}(n-1)< \mathrm{SO}(n)$, and the associated bundle $B\to M$ is the unit tangent sphere bundle $STM\to M$. We use the convention that $i$ ranges from 1 to $n$, and $\alpha, \beta$ range from $1$ to $n-1$. The invariant polynomial of degree $k$ on $\fs\fo(n)$ for the Euler form is, up to the scale $\frac{1}{(2\pi)^k}$ to make the class integral, the Pfaffian defined  by 
\ben
\label{which P?}
Pf(A)=\frac{1}{2^k k!}\sum_{i}\epsilon(i)A_{i_1i_2}\dots A_{i_{n-1}i_n}
\een
for $A\in \fs\fo(n)$, where the summation ranges over permutations $i$ of $\{1,\dots,n\}$ and $\epsilon(i)$ is the sign of $i$. 
(In this paper, products of  differential forms always mean ``exterior products" although we omit the notation $\wedge$ for simplicity.) 

\begin{theorem}[Chern \cite{chern2}] 
There is a differential form 
on the unit tangent sphere bundle $STM$ defined by 
\ben 
\label{chern's}
TPf(\omega)=\sum_{j=0}^{k-1}  \frac{1}{2^{j}j!(2k-2j-1)!!} 
\sum_\a \epsilon(\a) \Omega_{\a_1 \a_2}\dots \Omega_{\a_{2j-1}\a_{2j}}\o_{\a_{2j+1}n}\dots\o_{\a_{n-1}n}
\een 
such that 
\ben
\label{no Psi}
dTPf(\omega)=Pf(\Omega).
\een
\end{theorem}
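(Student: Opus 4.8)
The plan is to specialize Theorem~\ref{general formula} to the inclusion $SO(n-1)\subset SO(n)$, for which the reductive homogeneous space is the sphere $G/H = S^{n-1}$ (in fact a symmetric space) and the associated bundle is $B = STM$, taking $P = \wt{Pf}$ as in \er{which P?}. With respect to the standard $\ad_{SO(n-1)}$-invariant splitting $\fs\fo(n) = \fs\fo(n-1) + \fp$, in which $\fp$ is the space of antisymmetric matrices supported on the last row and column, the connection decomposes so that $\psi = (\omega_{\alpha\beta})$ while the horizontal part is $\phi = (\omega_{\alpha n})$, with $\phi_{\alpha n} = \omega_{\alpha n}$ and $\phi_{n\alpha} = -\omega_{\alpha n}$.

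First I would record why the right-hand side of \er{new relation} loses its $\Psi$-term and becomes \er{no Psi}: the curvature $\Psi$ takes values in $\fs\fo(n-1)$, so $\Psi_{\alpha n} = 0$ for all $\alpha$, while every monomial in \er{which P?} carries the index $n$ in one of its factors; hence $\wt{Pf}(\Psi) = 0$.

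The central computation is the horizontal block of $\Omega(t)$. Since $S^{n-1}$ is symmetric one has $[\fp,\fp]\subseteq\fh$, and $d_\ch\phi$ is again $\fp$-valued, so $(d_\ch\phi)_{\alpha\beta} = 0$; moreover $[\phi,\phi]_{\alpha\beta} = 2\,\phi_{\alpha n}\wedge\phi_{n\beta} = -2\,\omega_{\alpha n}\wedge\omega_{\beta n}$. Restricting \er{differential} (or \er{Omega(t)}) to the $\alpha\beta$-block then gives the clean identity $\Omega(t)_{\alpha\beta} = (1-t^2)\Psi_{\alpha\beta} + t^2\Omega_{\alpha\beta} = \Omega_{\alpha\beta} + (1-t^2)\,\omega_{\alpha n}\wedge\omega_{\beta n}$. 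Substituting this into the integral formula \er{tpo}, I would use that the slot holding $\phi$ forces the index $n$ into its pair and contributes the single factor $\omega_{\alpha_1 n}$, so the remaining $k-1$ slots only ever see the horizontal block; the antisymmetrization of the Pfaffian then collapses the sum over permutations of $\{1,\dots,n\}$ to one over $\{1,\dots,n-1\}$ (with $n = 2k$ even, the transposition sign from extracting $n$ is $+1$), together with a factor $2$ from the two positions of $n$. Expanding the $k-1$ copies of $\Omega(t)_{\alpha\beta}$ by the binomial theorem into $j$ curvature factors and $k-1-j$ factors $(1-t^2)\,\omega_{\alpha n}\wedge\omega_{\beta n}$, and evaluating the Wallis/beta integral $\int_0^1 (1-t^2)^{k-1-j}\,dt = \frac{2^{k-1-j}(k-1-j)!}{(2k-2j-1)!!}$, reproduces exactly the double factorial in \er{chern's}; collecting this with the normalization $\frac{(-1)^k}{2^k k!}$ of \er{which P?}, the outer factor $k$, the binomial coefficient $\binom{k-1}{j}$, and the factor $2$ then yields the coefficient of \er{chern's}.

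The main obstacle is the sign and combinatorial bookkeeping in this last assembly — above all, landing on the alternating sign $(-1)^{j+1}$. That sign is controlled by the sign carried by the term $\omega_{\alpha n}\wedge\omega_{\beta n}$ in the horizontal block, raised to the power $k-1-j$ equal to the number of such factors; an error of a single sign there would change the answer from one of uniform parity to Chern's $j$-dependent parity. Consequently the crux is to pin down this sign (equivalently $[\phi,\phi]_\fh$ and the structure equations of the sphere bundle in the normalization of \cite{chern2}) so that this power of $-1$, combined with the $(-1)^k$ from \er{which P?}, collapses via $(-1)^{2k-1-j} = (-1)^{j+1}$; by contrast the vanishing of $\wt{Pf}(\Psi)$ and the beta-integral evaluation are entirely routine.
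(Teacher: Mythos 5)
Your strategy is exactly the paper's: specialize Theorem \ref{general formula} to $SO(n-1)\subset SO(n)$, observe that $\wt{Pf}(\Psi)=0$ because $\Psi$ is supported in the $\fs\fo(n-1)$-block while every monomial of \eqref{which P?} uses the index $n$, reduce everything to the $(\alpha,\beta)$-block of $\Omega(t)$, and finish with the binomial theorem and the integral $\int_0^1(1-t^2)^{k-j-1}\,dt=(2k-2j-2)!!/(2k-2j-1)!!$. All of that matches the paper's proof, including the factor $\mathbf 2$ from the two possible positions of $n$ in the $\phi$-slot and the convention-independent identity $\Omega(t)_{\alpha\beta}=(1-t^2)\Psi_{\alpha\beta}+t^2\Omega_{\alpha\beta}$.

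The gap is the one you yourself flag and then leave open: the sign of the $[\phi,\phi]$-contribution to the horizontal block. You write $\Omega(t)_{\alpha\beta}=\Omega_{\alpha\beta}+(1-t^2)\,\omega_{\alpha n}\wedge\omega_{\beta n}$, coming from $\bigl(\tfrac12[\phi,\phi]\bigr)_{\alpha\beta}=\phi_{\alpha n}\wedge\phi_{n\beta}=-\omega_{\alpha n}\wedge\omega_{\beta n}$. If you carry that sign through your own assembly, each of the $k-1-j$ factors contributes $+\omega_{\alpha n}\wedge\omega_{\beta n}$ with no minus sign, and the total sign is the uniform $(-1)^k$ inherited from \eqref{which P?}, not the alternating $(-1)^{j+1}$ of \eqref{chern's}. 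The paper instead records $\bigl(\tfrac12[\phi,\phi]\bigr)_{\alpha\beta}=+\omega_{\alpha n}\wedge\omega_{\beta n}$ in \eqref{to plug in}, so that \eqref{simpler ot} reads $\Omega(t)_{\alpha\beta}=\Omega_{\alpha\beta}-(1-t^2)\omega_{\alpha n}\wedge\omega_{\beta n}$ and the resulting $(-1)^{k-j-1}$ combines with $(-1)^k$ to give $(-1)^{j+1}$. Equivalently, the issue is whether the Gauss-type relation is $\Psi_{\alpha\beta}=\Omega_{\alpha\beta}+\omega_{\alpha n}\wedge\omega_{\beta n}$ or the same with a minus sign, and this is governed by the sign hidden in identifying $\tfrac12[\omega,\omega]$ with $\pm\,\omega\wedge\omega$ in \eqref{structural} (Chern's classical papers use $\Omega_{ij}=d\omega_{ij}-\sum_k\omega_{ik}\wedge\omega_{kj}$, which is what makes the alternating parity come out). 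A complete proof must commit to one convention, compute the $\fh$-component of $[\phi,\phi]$ under it, and carry that sign all the way into \eqref{chern's}; stating that ``the crux is to pin down this sign'' while writing down the sign that yields the wrong parity is precisely the missing step. The rest of your outline is routine and correct.
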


\begin{remark} Unfortunately there some confusing sign discrepancies in the literature. To the author, these are caused by Chern's nonstandard choice (from the viewpoint of principal bundles) of subindices to indicate rows and columns. His connection matrices $(\omega_{ij})$ and curvature matrices $(\Omega_{ij})$ (see for example \cite[(2)]{chern2}) are the \emph{transposes} of the standard ones. Since these matrices are skew-symmetric, a lot of signs are generated this way. Our formula \er{chern's} is actually the modified one by H. Wu \cite[(10)]{Wu} in his survey of Chern's work. 
\end{remark}


\begin{proof}[Proof using Theorem \ref{general formula}]  Write the connection form $\omega$ with values in skew-symmetric matrices as
$$
\omega=\left(\begin{matrix}
\omega_{\a\beta} & \omega_{\alpha n}\\
-\omega_{\beta n} & 0
\end{matrix}\right).
$$
By \er{opp} and with the obvious decomposition, we have
\ben
\label{opp-wo} 
\psi=\left(\begin{matrix}
\omega_{\a\beta} & 0\\
0 & 0
\end{matrix}\right),\ 
\phi=\left(\begin{matrix}
0 & \omega_{\alpha n}\\
-\omega_{\beta n} & 0
\end{matrix}\right).
\een
We compute 
\ben
\label{to plug in}
-\frac 1 2 [\phi,\phi]=\left(\begin{matrix}
\omega_{\a n}\o_{\beta n} & 0\\
0 & 0
\end{matrix}\right),\ 
\Omega=\left(\begin{matrix}
\Omega_{\a\beta} & \Omega_{\a n}\\
\Omega_{n\beta} & 0
\end{matrix}\right).
\een
When applying formula \er{tpo} for $Pf$ in \er{which P?}, one has the factor $\phi_{i_1i_2}$. Therefore  in view of \er{opp-wo}, one of the $i_1,i_2$ must be $n$ for $\phi_{i_1i_2}$ to be nonzero.  As a result, one only cares about the $(\a,\beta)$-elements of $\Omega(t)$ in \er{tpo} in our situation.  
Note that from \er{phi-cov} and \er{opp-wo}, the $\psi$-covariant derivative of $\phi$
\begin{align*}
&d_\ch\phi=d\phi+[\psi,\phi]
=\left(\begin{matrix}
0 & \Omega_{\a n}\\
\Omega_{n\beta} & 0
\end{matrix}
\right)
\end{align*}
has trivial $(\a,\beta)$-elements. 
Hence we say $d_\ch\phi\equiv 0$ with $``\equiv"$ meaning ``having the same $(\a,\beta)$-elements". Then \er{eq-O} gives $\Psi\equiv \Omega-\frac 1 2 [\phi,\phi]$ and \er{differential} gives
\ben
\label{simpler ot}
\Omega(t)\equiv \Omega-\frac 1 2(1-t^2)[\phi,\phi].
\een

We now compute formula \er{tpo} using \er{simpler ot} and the idea for the proof of Theorem \ref{expansion}, rather than computing directly \er{dj's} which may be harder. That is, we will apply the binomial theorem and some basic integral formula, which in this case is
$$
\int_0^1 \left(1-t^2\right)^{k-j-1}\, dt= \frac{(2k-2j-2)!!}{(2k-2j-1)!!}
$$
by for example trigonometric substitution and induction using integration by parts. 
We proceed as follows:
{\allowdisplaybreaks
\begin{align*}
&TPf(\omega)=k\int_0^1 Pf\biggl(\phi,\underbrace{\Omega-\frac 1 2(1-t^2)[\phi,\phi],\dots}_{k-1}\biggr)\,dt\\
=&k\sum_{j=0}^{k-1} \frac{(k-1)!}{j!(k-j-1)!}\int_0^1 \bigl(1-t^2\bigr)^{k-j-1}\, dt\, Pf\biggl(\phi,\Omega^j,\Bigl(-\frac 1 2 [\phi,\phi]\Bigr)^{k-j-1}\biggr)\\
=&\sum_{j=0}^{k-1}  \frac{k!}{j!(k-j-1)!} \frac{(2k-2j-2)!!}{(2k-2j-1)!!}\,Pf\biggl(\phi,\Omega^j,\Bigl(-\frac 1 2 [\phi,\phi]\Bigr)^{k-j-1}\biggr)\\
=&\sum_{j=0}^{k-1}  \frac{k!}{j!(k-j-1)!} \frac{2^{k-j-1}(k-j-1)!}{(2k-2j-1)!!} \frac {1} {2^k k!}\\
& \qquad \sum_i \epsilon(i) \phi_{i_1 i_2}\Omega_{i_3 i_4}\dots \Omega_{i_{2j+1}i_{2j+2}}\o_{i_{2j+3}n}\o_{i_{2j+4}n}\dots\o_{i_{n-1}n}\o_{i_n n}\\
=&\sum_{j=0}^{k-1}  \frac{ 1}{2^{j+1}j!(2k-2j-1)!!} {\bf 2}\sum_\a \epsilon(\a) \omega_{\a_1 n}\Omega_{\a_2 \a_3}\dots \Omega_{\a_{2j}\a_{2j+1}}\o_{\a_{2j+2}n}\dots\o_{\a_{n-1}n},
\end{align*}
}
where the second last equality follows from \er{which P?} and \er{to plug in}, and the last equality follows after some cancellation of coefficients and from \er{opp-wo}. Here the {\bf 2} appears because $n$ can appear as $i_1$ or $i_2$ in $\phi_{i_1i_2}$ and the summation ranges over permutations $\alpha$ of $\{1,\dots,n-1\}$. 
The last expression is exactly \er{chern's}, after canceling the {\bf 2} and moving the $\omega_{\a_1 n}$ after the $\Omega$'s. 

Now $\Psi=d\psi+\frac 1 2 [\psi,\psi]$ clearly has the last row and column zero from \er{opp-wo}. One then has $Pf(\Psi)=0$ for our $Pf$ in \er{which P?}. Therefore \er{new relation}
implies \er{no Psi}.
\end{proof}

\begin{remark} The compatibility of Chern's transgression form in \cite{chern2} with the modern transgressed Euler form, the Mathai-Quillen form, is explained for example in \cite[\S 3.5]{Z}. 
\end{remark}

\begin{bibdiv}
\begin{biblist}

\bib{BM}{article}{
   author={Br{\"u}ning, J.},
   author={Ma, Xiaonan},
   title={An anomaly formula for Ray-Singer metrics on manifolds with
   boundary},
   journal={Geom. Funct. Anal.},
   volume={16},
   date={2006},
   number={4},
   pages={767--837},
   issn={1016-443X},
}

\bib{chern}
{article}{
   author={Chern, Shiing-shen},
   title={A simple intrinsic proof of the Gauss-Bonnet formula for closed
   Riemannian manifolds},
   journal={Ann. of Math. (2)},
   volume={45},
   date={1944},
   pages={747--752},
   issn={0003-486X},
}

\bib{chern2}{article}{
   author={Chern, Shiing-shen},
   title={On the curvatura integra in a Riemannian manifold},
   journal={Ann. of Math. (2)},
   volume={46},
   date={1945},
   pages={674--684},
   issn={0003-486X},
}

\bib{chern90}
{article}{
   author={Chern, Shiing-shen},
   title={Transgression in associated bundles},
   journal={Internat. J. Math.},
   volume={2},
   date={1991},
   number={4},
   pages={383--393},
   issn={0129-167X},
}

\bib{cs}
{article}{
   author={Chern, Shiing-shen},
   author={Simons, James},
   title={Characteristic forms and geometric invariants},
   journal={Ann. of Math. (2)},
   volume={99},
   date={1974},
   pages={48--69},
   issn={0003-486X},
}

\bib{johnson}
{article}{
   author={Johnson, David L.},
   title={Chern-Simons forms on associated bundles, and boundary terms},
   journal={Geom. Dedicata},
   volume={128},
   date={2007},
   pages={39--54},
   issn={0046-5755},
}

\bib{KN2}{book}{
   author={Kobayashi, Shoshichi},
   author={Nomizu, Katsumi},
   title={Foundations of differential geometry. Vol. II},
   series={Interscience Tracts in Pure and Applied Mathematics, No. 15 Vol.
   II },
   publisher={Interscience Publishers John Wiley \& Sons, Inc., New
   York-London-Sydney},
   date={1969},
   pages={xv+470},
}

\bib{Wu}{article}{
   author={Wu, H.},
   title={Shiing-shen Chern: 1911--2004},
   journal={Bull. Amer. Math. Soc. (N.S.)},
   volume={46},
   date={2009},
   number={2},
   pages={327--338},
   issn={0273-0979},
}

\bib{Z}{book}{
   author={Zhang, Weiping},
   title={Lectures on Chern-Weil theory and Witten deformations},
   series={Nankai Tracts in Mathematics},
   volume={4},
   publisher={World Scientific Publishing Co. Inc.},
   place={River Edge, NJ},
   date={2001},
   pages={xii+117},
   isbn={981-02-4686-2},
}

\end{biblist}
\end{bibdiv}

\bigskip

\end{document}

\bib{chern}
{article}{
   author={Chern, Shiing-shen},
   title={A simple intrinsic proof of the Gauss-Bonnet formula for closed
   Riemannian manifolds},
   journal={Ann. of Math. (2)},
   volume={45},
   date={1944},
   pages={747--752},
   issn={0003-486X},
}